\newtheorem{thm}{Theorem}[section]
\newtheorem{prop}[thm]{Proposition}
\newtheorem{lem}[thm]{Lemma}
\newtheorem{rem}[thm]{Remark}
\newtheorem{defn}[thm]{Definition}
\begin{document}

\begin{center}
{\Large \textbf{Cauchy numbers in type $B$}} \vspace*{0.5cm}
\end{center}

\begin{center}\begin{center}
Alnour Altoum$^{a,1}$,  Hasan Arslan$^{b,2}$, Mariam Zaarour$^{a,3}$ \\
$^{a}${\small {\textit{Graduate School of Natural and Applied Sciences, Erciyes University, 38039, Kayseri, Turkey}}}\\
$^{b}${\small {\textit{Department of  Mathematics, Faculty of Science, Erciyes University, 38039, Kayseri, Turkey}}}\\
{{\small {\textit{$^{1}$alnouraltoum178@gmail.com}}}\small {\textit{ $^{2}$hasanarslan@erciyes.edu.tr}}}\\
{\small {\textit{$^{3}$mariamzaarour94@gmail.com }}}\\[0pt]
\end{center}
\end{center}

\begin{abstract}
In this paper, we will introduce the Cauchy numbers of both kinds in type B and produce their corresponding exponential generating functions. Then we will provide some identities involving Cauchy, Lah, and Stirling numbers in type B through combinatorial methods.
\end{abstract}

\textbf{Keywords}: Cauchy numbers, Stirling numbers, Lah numbers, generating functions.\\

\textbf{2020 Mathematics Subject Classification}: 05A15, 05A19.
\\

Cauchy numbers are defined as the integration of rising and falling factorials \cite{comtet1974advanced}. These numbers can be classified into two categories: Cauchy numbers of the first kind, denoted by $C_n$,  and Cauchy numbers of the second kind, denoted as $c_n$. The first kind is defined by
\begin{equation*}
	C_n = \int_{0}^{1} (x)_n \, dx 
\end{equation*}
where $(x)_n := x(x-1)(x-2) \cdots (x-n+1)$ represents the falling factorial. The second kind is defined to be
\begin{equation*}
	c_n = \int_{0}^{1} [x]_n \, dx 
\end{equation*}
where $[x]_n :=x(x+1)(x+2)(x+3) \cdots (x+n-1)$ represents the rising factorial \cite{comtet1974advanced}.

The ordinary generating function of any  infinite sequence $(a_k)_{k \in \mathbb{N}}$ is defined by $g(x)=\sum_{k=0}^{\infty}a_k x^k$ and shortly denoted by $\mathcal{G}(a_k)=g(x)$. The corresponding exponential generating function to the sequence $(a_k)_{k \in \mathbb{N}}$ has the form  $\hat{g}(x)=\sum_{k=0}^{\infty}\frac{a_k}{k!} x^k$ and briefly denoted by $\mathcal{E}(a_k)=\hat{g}(x)$. Due to \cite{merlini2007}, the inverse operator of a formal power series $g(x)=\sum_{k=0}^{\infty}a_k x^k$ provides the coefficient of $x^k$ such that $[x^k]g(x)=a_k$ for all $k \in \mathbb{N}$.

Riordan array is actually a direct coefficient derivation method and it was first introduced by Shapiro in \cite{shapiro1991}. Thus, it is useful to derive the exponential generating function of Cauchy numbers. Riordan array $B=(b_{n,k})_{n,k \in \mathbb{N}}$, which is a lower triangular and infinite matrix, is identified with a pair of formal power series such that $B=\mathcal{R}(b_{n,k})=(b(x),c(x))$, where 
\begin{equation}\label{riordan0}
	b_{n,k}=[x^n]b(x)(xc(x))^k
\end{equation}
for all $n \in \mathbb{N}$ where $\mathbb{N}$ denotes the set of natural numbers. One of the fundamental properties of Riordan array is the \textit{summation property} which is given as follows:
\begin{equation}\label{riordan1}
	\sum_{k=0}^{n}b_{n,k}g_k=[x^n]b(x)g(xc(x))
\end{equation}
where $\mathcal{R}(b_{n,k})=(b(x),c(x))$ and $g(x)$ is the ordinary generating function of the sequence $(g_k)_{k \in \mathbb{N}}$. We will mainly use the following expression instead of Eq. (\ref{riordan1}):
\begin{equation}\label{riordan2}
	\sum_{k=0}^{n}b_{n,k}g_k=[x^n]b(x)\left [g(y) : y=xc(x) \right ].
\end{equation}
It is well-known from \cite{merlini2006cauchy} that,
$$\mathcal{R}(\frac{k!}{n!}c(n,k))=\left(1, \frac{1}{x} ln\frac{1}{1-x} \right)~\textrm{and}~\mathcal{R}(\frac{k!}{n!}S(n,k))=\left(1, \frac{e^x-1}{x}  \right).$$
where $c(n, k)$ is the classical signless Stirling number of the first kind and $S(n,k)$ is the classical Stirling numbers of the second kind. 

According to \cite{merlini2006cauchy}, the classical Cauchy numbers of the first kind have the exponential generating function which is given by:
\begin{equation*}
	\mathcal{E}(C_n)=\sum_{k=0}^{\infty} C_n \frac{x^n}{n!} = \frac{x}{ln(1+x)}
\end{equation*}
and these numbers are related to the Stirling numbers of the first kind through the formula:
\begin{equation*}
	C_n = \sum_{k=0}^{n} \frac{ s(n,k)}{k+1}
\end{equation*}
where $s(n, k):= (-1)^{n-k}c(n, k)$ is known as a Stirling number of the first kind. The exponential generating function of Cauchy numbers of the second kind has the following form (see \cite{merlini2006cauchy}):
\begin{equation*}
		\mathcal{E}(c_n)=\sum_{k=0}^{\infty} c_n \frac{x^n}{n!} = \frac{x}{(1+x)ln(1+x)}
\end{equation*}
and these numbers can be expressed in terms of signless Stirling numbers of the first kind as follows: 
\begin{equation*}
	c_n = (-1)^n \sum_{k=0}^{n}  \frac{c(n,k) }{k+1}.
\end{equation*}

The rest of this paper is organized as follows: In section 2, we recall the concept of Stirling numbers in type $B$. In section 3, we introduce the Cauchy numbers of both kinds and Lah numbers in type B. Furthermore, we drive the exponential generating functions of Cauchy numbers in type $B$ with the help of the Riordan array. Finally, we generalize the Cauchy and Lah numbers into $G_{m,n}$ type.

\section{Preliminaries}
The Stirling numbers of the second kind in type $B$ which is denoted by $S_B(n,k)$ was defined first by Reiner in \cite{br2} by the following recurrence relation:
\begin{equation*}
S_B(n,k)=S_B(n-1,k-1)+(2k+1)S_B(n-1,k), \quad 1 \leq k < n 
\end{equation*}
with the initial conditions  $S_B(n,n)=S_B(n,0) = 1$ for all $n \geq 0$.
The signless Stirling numbers of the first kind in type $B$ are identified with the recurrence relation 
\begin{equation*}
	c_B(n,k)=c_B(n-1,k-1)+(2n-1) c_B(n-1,k), \quad   k\geq 0
\end{equation*}
where  $c_B(n,n)=c_B(1,0) =1$ and $c_B(n,k)=0$ if $k<0$ (see  \cite{sagan2022q}). We also note that the number $s_B(n, k):=(-1)^{n-k}c_B(n, k)$ is known as a Stirling number of the first kind. The second kind of Stirling numbers in type $B$ corresponds to the sequence oeis.org/A039755 in OEIS. One could see Stirling numbers in type $B$ of the second kind for small values of $n$ and $k$ in Table \ref{tab:table3}.
\newpage
\begin{table}[h!] 
\caption{Second kind Stirling numbers in type $B$}
	\label{tab:table3}		
	\centering
	\scalebox{0.9}{
			\begin{tabular}{c|cccccccc}
					\hline
					n & $S_B(n,0)$ & $S_B(n,1)$ &$S_B(n,2)$&$S_B(n,3)$& $S_B(n,4)$& $S_B(n,5)$&$S_B(n,6)$& $S_B(n,7)$\\
					\hline
					0&1\\
					1&1&1\\
					2&1&4&1\\
					3&1&13&9&1\\
					4&1&40&58&16&1\\
					5&1&121&330&170&25&1\\
					6&1&364&1771&1520&395&36&1\\
					7&1&1093&9219&12411&5075&791&49&1\\
					\hline
			\end{tabular}}
\end{table}

The following table presents some Stirling numbers of the first kind $c_B(n,k)$ which is associated with the sequence oeis.org/A039758 in OEIS.

\begin{table}[h!]
	\caption{Fist kind signless Stirling numbers in type $B$}
	\label{tab:table4}		
	\centering
	\scalebox{0.9}{
			\begin{tabular}{c|cccccccc}
					\hline
					n & $c_B(n,0)$ & $c_B(n,1)$ &$c_B(n,2)$&$c_B(n,3)$& $c_B(n,4)$& $c_B(n,5)$&$c_B(n,6)$& $c_B(n,7)$\\
					\hline
					0&1\\
					1&1&1\\
					2&3&4&1\\
					3&15&23&9&1\\
					4&105&176&86&16&1\\
					5&945&1689&950&230&25&1\\
					6&10395&19524&12139&3480&505&36&1\\
					7&135135&264207&177331&57379&10045&973&49&1\\
					\hline
			\end{tabular}}
\end{table}

The following theorem, which is provided by \cite{bagno2019some} and \cite{sagan2022q}, expresses $x^n$ as a sum of the terms involving both Stirling numbers of the second kind and falling factorial in type $B$. The falling factorial in type $B$ is defined to be $(x)_n^B = (x-1)(x-3) (x-5) \cdots (x-2n+1)$ with initial condition $(x)^B_0 = 1$ (see  \cite{sagan2022q}).

\begin{thm} \label{m}
	For any integer $n\geq 0$, we have
\begin{equation*}
x^n = \sum_{k=0}^{n} S_B(n,k)(x)_k^B.
\end{equation*}
\end{thm}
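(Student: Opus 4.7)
The plan is to prove Theorem \ref{m} by induction on $n$, using the defining recurrence of $S_B(n,k)$ together with the simple algebraic identity
\[
x\cdot (x)_k^B = (x)_{k+1}^B + (2k+1)(x)_k^B,
\]
which follows by writing $x = (x-2k-1) + (2k+1)$ and noting that $(x)_{k+1}^B = (x)_k^B \cdot (x-2k-1)$ from the definition $(x)_k^B=(x-1)(x-3)\cdots(x-2k+1)$.

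For the base case $n=0$, both sides equal $1$ since $(x)_0^B=1$ and $S_B(0,0)=1$. For the inductive step, I would assume the identity holds for $n-1$ and multiply both sides by $x$:
\[
x^n = \sum_{k=0}^{n-1} S_B(n-1,k)\,x\,(x)_k^B = \sum_{k=0}^{n-1} S_B(n-1,k)\bigl[(x)_{k+1}^B+(2k+1)(x)_k^B\bigr].
\]
Splitting the sum, reindexing the first one by $k\mapsto k-1$, and recombining gives
\[
x^n = \sum_{k=0}^{n} \bigl[S_B(n-1,k-1)+(2k+1)S_B(n-1,k)\bigr](x)_k^B,
\]
with the convention $S_B(n-1,-1)=S_B(n-1,n)=0$. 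The bracketed expression is exactly $S_B(n,k)$ by the recurrence quoted in the Preliminaries, so the identity holds for $n$.

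The only subtlety is verifying the boundary values $k=0$ and $k=n$ against the extended recurrence, since the recurrence was stated only for $1\le k<n$. For $k=0$ we get $0+(1)S_B(n-1,0)=1=S_B(n,0)$, and for $k=n$ we get $S_B(n-1,n-1)+0=1=S_B(n,n)$, both consistent with the initial conditions. This is the main (minor) point to check carefully; the rest is a routine telescoping of the algebraic identity with the recurrence, so I would not expect any serious obstacle.
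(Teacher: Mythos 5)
Your induction is correct: the key identity $x\,(x)_k^B=(x)_{k+1}^B+(2k+1)(x)_k^B$, the reindexing, and the boundary checks at $k=0$ and $k=n$ against the initial conditions $S_B(n,0)=S_B(n,n)=1$ all go through. The paper gives no proof of Theorem \ref{m} at all, quoting it from \cite{bagno2019some} and \cite{sagan2022q}, so there is nothing internal to compare against; your argument is the standard inductive one, and the same factorization $x(x)_n^B=(x)_{n+1}^B+(2n+1)(x)_n^B$ is precisely what the paper itself uses later in deriving the recurrence for $C_{n+1}^B$.
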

Taking into account Corollary 2.7 in \cite{sagan2022q}, it is observed that the matrices $[s_B(n,k)]_{n,k\geq0}$  and $[S_B(n,k)]_{n,k\geq0}$ are inverse of each other. Therefore, it is easy to see that 
\begin{equation}\label{9}
	(x)_n^B = \sum_{k=0}^{n} s_B(n,k)x^k.
\end{equation}

The rising factorial in type $B$ is defined as $[x]^B_n = (x+1)(x+3)(x+5) \cdots (x+2n-1)$  with the initial condition $[x]^B_0 = 1$. It is well-known from part (c) of Theorem 2.1 in \cite{sagan2022q} that for any nonnegative integer n
\begin{equation} \label{50}
[x]^B_n = \sum_{k=0}^{n} c_B(n,k) x^k.
\end{equation}
	
Conversely, for all $n\in \mathbb{N}$ the ordering powers $x^n$ can be easily expressed as a linear combination of rising factorials $[x]_n^B$ as follows:
\begin{equation} \label{7}
x^n = \sum_{k=0}^{n} S_B(n,k) (-1)^{n-k} [x]^B_k ~~\textrm{for}~\textrm{all}~ n \geq 0.
\end{equation}

\section{Cauchy numbers in type $B$}
\label{sec:sample1}
In this section, we will introduce the notions of Cauchy and Lah numbers in type $B$. These numbers will be defined using both falling and rising factorials in type $B$. Additionally, we will explore some relationships between Cauchy, Stirling, and Lah numbers.
\begin{defn}
	The type $B$ Cauchy numbers of the first kind are defined by the following definite integral
	\begin{equation*}
		C_n^B = \int_{0}^{1} (x)^B_n \,dx.
	\end{equation*}
\end{defn}

Table \ref{tab:table1} records some values of the first kind of Cauchy numbers by giving a few small $n$ values.

\begin{table}[h!]
	\caption{Cauchy numbers of the first kind in type $B$}\label{2}
	\label{tab:table1}		
	\centering
	\begin{tabular}{|c|c|c|c|c|c|c|c|c|}
		\hline
		n & 0&1 &2 & 3 & 4& 5& 6 & 7\\
		\hline
		$C_n^B$ &1& -1/2 & 4/3& -25/4 &628/15&-729/2& 81994/21&-1191619/24\\
		\hline
	\end{tabular}
\end{table}

\begin{prop}
The Cauchy numbers of the first kind hold for the following recurrence relation 
\begin{equation*}
C_{n+1}^B +  (2n+1) C_n^B= \sum_{k=0}^{n} \frac{s_B(n,k)}{k+2}.
\end{equation*}

\end{prop}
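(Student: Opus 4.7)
The plan is to exploit two facts: a simple one-step recursion for the falling factorial in type $B$, and the expansion of $(x)_n^B$ in the monomial basis given by equation~(\ref{9}). Both are readily available from the material in the excerpt.

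First, I would observe from the definition $(x)_n^B=(x-1)(x-3)\cdots(x-2n+1)$ that
\[
(x)_{n+1}^B = (x-(2n+1))(x)_n^B = x\,(x)_n^B - (2n+1)(x)_n^B.
\]
Integrating this identity from $0$ to $1$ and using the definition of $C_n^B$ gives
\[
C_{n+1}^B = \int_0^1 x\,(x)_n^B\, dx - (2n+1) C_n^B,
\]
which rearranges to
\[
C_{n+1}^B + (2n+1) C_n^B = \int_0^1 x\,(x)_n^B\, dx.
\]

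Next, I would substitute the expansion $(x)_n^B = \sum_{k=0}^{n} s_B(n,k)\, x^k$ from equation~(\ref{9}) into the remaining integral, interchange sum and integral (a finite sum, so no issue), and compute each monomial integral $\int_0^1 x^{k+1}\, dx = 1/(k+2)$:
\[
\int_0^1 x\,(x)_n^B\, dx = \sum_{k=0}^{n} s_B(n,k)\int_0^1 x^{k+1}\, dx = \sum_{k=0}^{n} \frac{s_B(n,k)}{k+2}.
\]
Combining with the previous display yields the claimed identity.

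There is no real obstacle here; the proof is essentially mechanical once one notices the telescoping-style recursion $(x)_{n+1}^B = (x-(2n+1))(x)_n^B$. The only point requiring any care is keeping track of the shift in exponent (from $x^k$ to $x^{k+1}$) which produces the denominator $k+2$ rather than $k+1$ — this is exactly what distinguishes the right-hand side here from the classical formula $C_n = \sum_{k=0}^n s(n,k)/(k+1)$ recalled in the introduction.
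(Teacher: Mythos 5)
Your proof is correct and follows essentially the same route as the paper: both start from the identity $x\,(x)_n^B = (x)_{n+1}^B + (2n+1)(x)_n^B$, integrate over $[0,1]$, and expand $(x)_n^B$ in powers of $x$ via the Stirling numbers $s_B(n,k)$ to get the denominators $k+2$. No gaps to report.
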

\begin{proof}
Due to the definition of the falling factorial of type $B$, we can write the relation $x(x)_n^B = (x)_{n+1}^B + (2n+1)(x)_n^B$. Therefore, by Theorem \ref{m} we get 
			\begin{align*}
				C_{n+1}^B &= \int_{0}^{1} (x)_{n+1}^B \,dx = \int_{0}^{1} (x(x)_n^B - (2n+1)(x)_n^B) \,dx \\ 
				&= \int_{0}^{1} \sum_{k=0}^{n} s_B(n,k) x^{k+1}  \,dx - (2n+1) \int_{0}^{1} (x)_{n}^B \,dx\\ 
				&= \sum_{k=0}^{n} \frac{s_B(n,k)}{k+2} - (2n+1) C_n^B,
			\end{align*}
			as desired.
\end{proof}
  
\begin{defn}
Cauchy numbers of the second kind in type $B$ are defined by definite integral as below:
\begin{equation*}
c^B_n = \int_{0}^{1} [x]^B_n\,dx.
\end{equation*}
\end{defn}

Table \ref{tab:table2} displays some special values for the second kind of Cauchy numbers of type $B$.
\begin{table}[h!]
\caption{Cauchy numbers of the second kind in type $B$}\label{2}
\label{tab:table2}		
	\centering
	\begin{tabular}{|c|c|c|c|c|c|c|c|c|}
		\hline
		n & 0&1 &2 & 3 & 4& 5& 6 & 7\\
		\hline
		$c_n^B$ &1& 3/2 & 16/3& 119/4 &3388/15&13013/6& 528790/21&2742975/8\\
		\hline
\end{tabular}
\end{table}
\begin{prop}
We have the following recurrence relation for the Cauchy numbers of the second kind in type $B$:
	
\begin{equation*}
\quad c_{n+1}^B -(2n+1)c_n^B = \sum_{k=0}^{n} \frac{c_B(n,k)}{k+2}.
\end{equation*}
\end{prop}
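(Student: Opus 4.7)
The proof will mirror the argument given for the first-kind recurrence, only swapping the falling factorial $(x)_n^B$ for the rising factorial $[x]_n^B$ and using \eqref{50} in place of \eqref{9}. The plan is as follows.

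First, I would read off from the definition $[x]_n^B=(x+1)(x+3)\cdots(x+2n-1)$ the one-step identity
\begin{equation*}
[x]_{n+1}^B = (x+2n+1)\,[x]_n^B,
\end{equation*}
which rearranges to $x\,[x]_n^B = [x]_{n+1}^B - (2n+1)\,[x]_n^B$. This is the type $B$ rising-factorial analogue of the identity $x(x)_n^B=(x)_{n+1}^B+(2n+1)(x)_n^B$ used in the previous proposition, and it is the only ingredient we need beyond \eqref{50}.

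Next, I would integrate this identity over $[0,1]$. The right-hand side becomes $c_{n+1}^B-(2n+1)c_n^B$ by the definition of $c_n^B$. For the left-hand side, I would apply \eqref{50} to expand
\begin{equation*}
x\,[x]_n^B \;=\; x\sum_{k=0}^{n} c_B(n,k)\,x^k \;=\; \sum_{k=0}^{n} c_B(n,k)\,x^{k+1},
\end{equation*}
and then integrate term-by-term, using $\int_0^1 x^{k+1}\,dx = \tfrac{1}{k+2}$, to obtain $\sum_{k=0}^n \tfrac{c_B(n,k)}{k+2}$. Equating the two sides yields the claimed recurrence.

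There is no real obstacle here; the only thing to be careful about is the sign, since the rising-factorial recursion has a $+(2n+1)$ where the falling-factorial one has $-(2n+1)$, which is exactly what produces the sign $-(2n+1)c_n^B$ on the left of the stated identity rather than $+(2n+1)c_n^B$ as in the first-kind case.
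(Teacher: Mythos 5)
Your proposal is correct and is essentially the paper's own proof, just written out in full detail: the paper likewise derives $[x]^B_{n+1}=x[x]^B_n+(2n+1)[x]^B_n$ from the definition of the rising factorial and then invokes Eq.~(\ref{50}) together with integration over $[0,1]$, exactly as you do. Your sign check is also accurate.
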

\begin{proof}
We can deduce the relation $[x]^B_{n+1}=x[x]^B_n+(2n+1)[x]^B_n$ from the definition of the rising factorial of type $B$. Therefore, the desired result can be easily seen from Eq. (\ref{50}).
\end{proof}

\begin{thm} \label{n}
For any positive integer $n$, we have the following formula
\begin{equation*}
\sum_{k=0}^{n} S_B(n,k) C_k^B = \frac{1}{n+1}= \sum_{k=0}^{n} S_B(n,k) (-1)^{n-k} c_k^B. 
\end{equation*}
\end{thm}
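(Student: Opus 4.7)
The plan is to observe that both sides of the identity admit a clean integral representation once we swap the summation with the integral defining $C_k^B$ and $c_k^B$ respectively, and then to collapse the inner sum using the two expansions of $x^n$ stated earlier in the excerpt.

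For the first equality, I would start from the definition $C_k^B = \int_0^1 (x)_k^B\,dx$ and, by linearity, write
\[
\sum_{k=0}^{n} S_B(n,k)\,C_k^B \;=\; \int_0^1 \sum_{k=0}^{n} S_B(n,k)\,(x)_k^B\,dx.
\]
The inner sum is exactly the right-hand side of Theorem \ref{m}, so it equals $x^n$, and the integral reduces to $\int_0^1 x^n\,dx = \frac{1}{n+1}$.

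For the second equality, the argument is symmetric: I plug in $c_k^B = \int_0^1 [x]_k^B\,dx$ and pull the finite sum inside the integral, obtaining
\[
\sum_{k=0}^{n} S_B(n,k)\,(-1)^{n-k}\,c_k^B \;=\; \int_0^1 \sum_{k=0}^{n} S_B(n,k)\,(-1)^{n-k}\,[x]_k^B\,dx,
\]
and now the inner sum equals $x^n$ by Eq. (\ref{7}), yielding $\frac{1}{n+1}$ again.

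There is essentially no obstacle here, since the whole argument is a one-line manipulation once the right expansion of $x^n$ is identified: Theorem \ref{m} for the falling-factorial side and Eq. (\ref{7}) for the rising-factorial side. The only thing worth being careful about is matching signs and indices in the second identity; the factor $(-1)^{n-k}$ in the statement lines up exactly with the signs appearing in Eq. (\ref{7}), so no further reindexing is required.
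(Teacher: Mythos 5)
Your proposal is correct and follows essentially the same route as the paper: expand $C_k^B$ and $c_k^B$ as integrals, exchange the finite sum with the integral, and collapse the inner sum to $x^n$ via Theorem \ref{m} for the first identity and Eq. (\ref{7}) for the second, giving $\int_0^1 x^n\,dx = \frac{1}{n+1}$. No differences worth noting.
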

\begin{proof}
Considering Theorem \ref{m} and the definition of the first kind of Cauchy numbers in type $B$, we have 
\begin{equation*}
\sum_{k=0}^{n} S_B(n,k) C_k^B = \sum_{k=0}^{n} S_B(n,k)\int_{0}^{1} (x)_n^B \,dx =  \int_{0}^{1} x^n \,dx = \frac{1}{n+1}.
\end{equation*}
Using Eq. (\ref{7}) and the definition of the second kind of Cauchy numbers of type B, then we get
\begin{equation*}
\sum_{k=0}^{n} S_B(n,k) (-1)^{n-k}c_k^B  =  \sum_{k=0}^{n} S_B(n,k) (-1)^{n-k}\int_{0}^{1} [x]_k^B \,dx= \int_{0}^{1} x^n \,dx = \frac{1}{n+1}.   
\end{equation*}
\end{proof}

 \subsection{The exponential generating functions of Cauchy numbers in type $B$}
 In order to derive the exponential generating functions for Cauchy numbers of both kinds in type $B$, we will apply the Riordan array of the signless Stirling numbers of the first kind in type $B$. Before going into a further discussion of the exponential generating functions, we will give an important relationship between Cauchy numbers of both kinds and Stirling numbers of the first kind in type $B$.
\begin{lem} \label{5}
	For all $n \geq 0$, we have the following relations:
	\begin{itemize}
		\item [1.] $C_n^B = \sum_{k=0}^{n}  \frac{s_B(n,k)}{k+1}$,
		\item[2.] $c_n^B = \sum_{k=0}^{n} \frac{c_B(n,k)}{k+1}$ .
	\end{itemize}
\end{lem}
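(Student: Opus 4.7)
The plan is to prove both identities by direct substitution into the integral definitions of the Cauchy numbers in type $B$, followed by termwise integration of monomials. The key ingredients are already in place: Eq.~(\ref{9}) expresses the type $B$ falling factorial $(x)_n^B$ as a polynomial in $x$ whose coefficients are the signed Stirling numbers $s_B(n,k)$, and Eq.~(\ref{50}) expresses the type $B$ rising factorial $[x]_n^B$ as a polynomial in $x$ whose coefficients are the signless Stirling numbers $c_B(n,k)$.

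For part 1, I would start from $C_n^B = \int_0^1 (x)_n^B\, dx$, insert Eq.~(\ref{9}) to rewrite $(x)_n^B = \sum_{k=0}^n s_B(n,k) x^k$, swap integral and finite sum, and then use the elementary fact $\int_0^1 x^k\, dx = \tfrac{1}{k+1}$. This yields the desired formula $C_n^B = \sum_{k=0}^{n} \frac{s_B(n,k)}{k+1}$ in one line of computation.

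For part 2, the argument is completely parallel: start from $c_n^B = \int_0^1 [x]_n^B\, dx$, apply Eq.~(\ref{50}) to expand $[x]_n^B = \sum_{k=0}^n c_B(n,k) x^k$, interchange the finite sum with the integral, and again evaluate $\int_0^1 x^k\, dx = \tfrac{1}{k+1}$ to obtain $c_n^B = \sum_{k=0}^{n} \frac{c_B(n,k)}{k+1}$.

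There is essentially no obstacle: both identities reduce to polynomial expansion and term-by-term integration over $[0,1]$, with no convergence, boundary, or combinatorial bijection issues to worry about. The only thing worth emphasizing in the write-up is that the swap of sum and integral is legitimate because each sum is finite, and that the lemma will immediately be useful in the next subsection as the bridge between the integral definitions of $C_n^B$ and $c_n^B$ and the Riordan-array expressions for the signless Stirling numbers of the first kind in type $B$, which is precisely what is needed to extract the exponential generating functions.
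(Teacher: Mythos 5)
Your proposal is correct and follows exactly the paper's own argument: expand $(x)_n^B$ via Eq.~(\ref{9}) and $[x]_n^B$ via Eq.~(\ref{50}), then integrate term by term over $[0,1]$ using $\int_0^1 x^k\,dx=\tfrac{1}{k+1}$. No differences worth noting.
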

\begin{proof}
Considering Eq. (\ref{9}), we then have 
\begin{align*}
C_n^B &=  \int_{0}^{1} (x)_n^B \,dx  \\ 
&=  \sum_{k=0}^{n} s_B(n,k) \int_{0}^{1} x^k \,dx \\
&= \sum_{k=0}^{n}  \frac{s_B(n,k)}{k+1}. 
\end{align*}
From Eq. (\ref{50}), we immediately obtain the second part as   
\begin{equation*}
c_n^B = \int_{0}^{1} [x]_n^B \,dx = \sum_{k=0}^{n} c_B(n,k) \int_{0}^{1} x^k \,dx = \sum_{k=0}^{n} \frac{c_B(n,k)}{k+1}. 
\end{equation*}
\end{proof}

We are now in a position to give the exponential generating functions of Cauchy numbers of both kinds in type $B$.

\begin{thm}\label{g1}
For the Cauchy numbers of the second kind, we have
	$$\mathcal{E}(c_n^B)=\sum_{n \geq 0}c_n^B~\frac{x^n}{n!}=\frac{1-\sqrt{1-2x}}{(2x-1)ln\sqrt{1-2x}}.$$
\end {thm}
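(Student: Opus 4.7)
The plan is to apply the same Riordan-array strategy that is used for the classical Cauchy numbers, replacing the usual signless Stirling numbers of the first kind by their type $B$ analogue. Starting from the second part of Lemma \ref{5}, we rewrite the EGF coefficients as
\[
\frac{c_n^B}{n!} = \sum_{k=0}^{n} \frac{k!}{n!}\, c_B(n,k) \cdot \frac{1}{(k+1)!},
\]
which is exactly the shape required by the summation property (\ref{riordan2}) with the array $\mathcal{R}\bigl(\tfrac{k!}{n!}\, c_B(n,k)\bigr)$ and column sequence $g_k = \tfrac{1}{(k+1)!}$, whose ordinary generating function is $g(y) = \tfrac{e^y - 1}{y}$.

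The main step is to identify the pair $(b(t), c(t))$ for $\mathcal{R}\bigl(\tfrac{k!}{n!}\, c_B(n,k)\bigr)$, which amounts to determining the bivariate exponential generating function of $[x]^B_n$. The key observation is that
\[
[x]^B_n = (x+1)(x+3)\cdots(x+2n-1) = 2^n\bigl[\tfrac{x+1}{2}\bigr]_n,
\]
where $[y]_n$ denotes the classical rising factorial. Since $\sum_n [y]_n \tfrac{u^n}{n!} = (1-u)^{-y}$, substituting $y = (x+1)/2$ and $u = 2t$ yields $\sum_n [x]^B_n \tfrac{t^n}{n!} = (1-2t)^{-(x+1)/2}$. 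Extracting the coefficient of $x^k$ from the factorisation $(1-2t)^{-1/2}\exp\!\bigl(\tfrac{x}{2}\ln\tfrac{1}{1-2t}\bigr)$ and comparing with Eq. (\ref{50}) produces
\[
\sum_{n \geq 0} \frac{k!}{n!}\, c_B(n,k)\, t^n = (1-2t)^{-1/2}\Bigl(\tfrac{1}{2}\ln\tfrac{1}{1-2t}\Bigr)^k,
\]
so the Riordan array is $\bigl((1-2t)^{-1/2},\, \tfrac{1}{2t}\ln\tfrac{1}{1-2t}\bigr)$.

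Feeding this into (\ref{riordan2}) and summing over $n$ then gives
\[
\sum_{n \geq 0} c_n^B \frac{t^n}{n!} = (1-2t)^{-1/2} \cdot \Bigl[\tfrac{e^y - 1}{y} : y = \tfrac{1}{2}\ln\tfrac{1}{1-2t}\Bigr].
\]
At $y = \tfrac{1}{2}\ln\tfrac{1}{1-2t}$ we have $e^y = (1-2t)^{-1/2}$, so the right-hand side simplifies to $\tfrac{2(1-\sqrt{1-2t})}{(1-2t)\ln\frac{1}{1-2t}}$, which coincides with $\tfrac{1-\sqrt{1-2t}}{(2t-1)\ln\sqrt{1-2t}}$ after using $\ln\sqrt{1-2t} = -\tfrac{1}{2}\ln\tfrac{1}{1-2t}$ and $1-2t = -(2t-1)$.

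The principal obstacle is the derivation of the Riordan array $\mathcal{R}\bigl(\tfrac{k!}{n!} c_B(n,k)\bigr)$, i.e., establishing the bivariate EGF of $[x]^B_n$. The reduction $[x]^B_n = 2^n[(x+1)/2]_n$ to the classical rising factorial is the pivotal move; once this is in hand, the remainder of the argument is one application of the Riordan summation formula followed by routine algebraic simplification.
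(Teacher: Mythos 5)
Your proposal is correct and follows essentially the same route as the paper: the identity $\frac{c_n^B}{n!}=\sum_{k=0}^n\frac{k!}{n!}c_B(n,k)\frac{1}{(k+1)!}$ from Lemma \ref{5}, the Riordan array $\mathcal{R}\bigl(\frac{k!}{n!}c_B(n,k)\bigr)=\bigl(\frac{1}{\sqrt{1-2x}},\frac{1}{x}\ln\frac{1}{\sqrt{1-2x}}\bigr)$, and the summation property (\ref{riordan2}) with $g(y)=\frac{e^y-1}{y}$. The only difference is that you derive the column generating function of $c_B(n,k)$ yourself via $[x]^B_n=2^n\bigl[\frac{x+1}{2}\bigr]_n$ and the classical rising-factorial series, whereas the paper cites Theorem 4.1(c) of Sagan--Swanson; your version is thus slightly more self-contained, and your final algebraic simplification (which the paper leaves implicit) checks out.
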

	
\begin{proof}
We can write $\sum_{n \geq 0}c_B(n,k)~\frac{x^n}{n!}=\frac{1}{k!\sqrt{1-2x}}\left(ln \frac{1}{\sqrt{1-2x}}\right)^k$ from Theorem 4.1 (c) in \cite{sagan2022q}. Therefore, we can deduce from Eq. (\ref{riordan0}) that
\begin{equation} \label{riordan3}
\mathcal{R}(\frac{k!}{n!}c_B(n,k))=\left(\frac{1}{\sqrt{1-2x}},\frac{1}{x} ln\frac{1}{\sqrt{1-2x}} \right).
\end{equation}
Using the second part of Lemma \ref{5}, we can get $\frac{c_n^B}{n!}=\sum_{k=0}^n \frac{k!}{n!}c_B(n,k)\frac{1}{(k+1)!}$. Since the ordinary generating function of $\frac{1}{(k+1)!}$ is equal to $\frac{e^x-1}{x}$ and considering Eq. (\ref{riordan2}) and Eq. (\ref{riordan3}) , we then extract
$$\frac{c_n^B}{n!}=[x^n]\frac{1}{\sqrt{1-2x}}\left [\frac{e^y-1}{y} : y= ln\frac{1}{\sqrt{1-2x}} \right ],$$
		as desired.
\end{proof}

\begin{thm} \label{g2}
For the Cauchy numbers of the first kind, we have 
$$\mathcal{E}((-1)^{(n-1)}C_n^B)=\sum_{n \geq 0}(-1)^{(n-1)}C_n^B~\frac{x^n}{n!}=\frac{1-\sqrt{1-2x}}{\sqrt{1-2x}~~ln\sqrt{1-2x}}.$$
\end {thm}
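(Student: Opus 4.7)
My plan is to mirror the proof of Theorem \ref{g1} by reusing the Riordan array in Eq. (\ref{riordan3}), and to absorb the sign coming from $s_B(n,k)=(-1)^{n-k}c_B(n,k)$ into the factor $(-1)^{n-1}$ appearing on the left-hand side. Concretely, starting from the first identity of Lemma \ref{5} and converting $s_B$ to $c_B$ gives
$$C_n^B=\sum_{k=0}^n\frac{(-1)^{n-k}}{k+1}c_B(n,k),$$
so multiplying through by $(-1)^{n-1}$ and using $(-1)^{2n-1-k}=(-1)^{k+1}$ produces
$$(-1)^{n-1}C_n^B=\sum_{k=0}^n\frac{(-1)^{k+1}}{k+1}c_B(n,k).$$

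Next I would divide by $n!$ so that the Riordan coefficients $\tfrac{k!}{n!}c_B(n,k)$ appear, rewrite $\frac{(-1)^{k+1}}{k+1}=(k+1)!\cdot\frac{(-1)^{k+1}}{(k+1)!}$, and compute the ordinary generating function of $g_k:=\frac{(-1)^{k+1}}{(k+1)!}$. A simple index shift $m=k+1$ shows $g(x)=\frac{e^{-x}-1}{x}$. Applying the summation property Eq. (\ref{riordan2}) with the Riordan pair $\left(\tfrac{1}{\sqrt{1-2x}},\tfrac{1}{x}\ln\tfrac{1}{\sqrt{1-2x}}\right)$ from Eq. (\ref{riordan3}) yields
$$(-1)^{n-1}\frac{C_n^B}{n!}=[x^n]\,\frac{1}{\sqrt{1-2x}}\left[\frac{e^{-y}-1}{y}:y=\ln\frac{1}{\sqrt{1-2x}}\right].$$

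The final step is to simplify the bracketed expression. Since $y=\ln\tfrac{1}{\sqrt{1-2x}}$ gives $e^{-y}=\sqrt{1-2x}$ and $y=-\ln\sqrt{1-2x}$, the fraction inside the brackets becomes $\tfrac{\sqrt{1-2x}-1}{-\ln\sqrt{1-2x}}=\tfrac{1-\sqrt{1-2x}}{\ln\sqrt{1-2x}}$, and after multiplication by $\tfrac{1}{\sqrt{1-2x}}$ I recover the claimed $\tfrac{1-\sqrt{1-2x}}{\sqrt{1-2x}\,\ln\sqrt{1-2x}}$. The main obstacle is purely one of sign bookkeeping: the two minus signs coming from $\sqrt{1-2x}-1=-(1-\sqrt{1-2x})$ and $\ln\tfrac{1}{\sqrt{1-2x}}=-\ln\sqrt{1-2x}$ must cancel in such a way that the factor $(-1)^{n-1}$ introduced at the outset is consistent with the final closed form, and this is what I would check most carefully before writing up the proof.
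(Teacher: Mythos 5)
Your proposal is correct and follows essentially the same route as the paper's own proof: Lemma \ref{5}(1) with the sign $(-1)^{n-1}$ absorbed, the ordinary generating function $\frac{e^{-x}-1}{x}$ of $\frac{(-1)^{k+1}}{(k+1)!}$, and the summation property of the Riordan array in Eq. (\ref{riordan3}). If anything, your bookkeeping is slightly cleaner than the paper's, which omits the factor $(-1)^{n-1}$ on the left-hand side of its final coefficient identity.
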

		
\begin{proof}
Using the first part of Lemma \ref{5}, we can write $\frac{C_n^B}{n!}=(-1)^{n-1} \sum_{k=0}^n \frac{k!}{n!}c_B(n,k)\frac{(-1)^{k+1}}{(k+1)!}$. Since the ordinary generating function of $\frac{(-1)^{k+1}}{(k+1)!}$ is equal to $\frac{e^{-x}-1}{x}$ and taking into consideration Eq. (\ref{riordan2}) and Eq. (\ref{riordan3}), we then derive the desired formula by means of the following relation:
$$\frac{C_n^B}{n!}=[x^n]\frac{1}{\sqrt{1-2x}}\left [\frac{e^{-y}-1}{y} : y= ln\frac{1}{\sqrt{1-2x}} \right ].$$
			
\end{proof}

One can illustrate Theorem \ref{g1} and Theorem \ref{g2} by considering Table \ref{tab:table2} and \ref{tab:table1}, respectively.

\subsection{Lah numbers in type $B$}
The classical Lah numbers, which were discovered by Ivo Lah in 1954 (see \cite{lah1954new}), are defined by binomial coefficient as
\begin{equation*}
 L(n,k) = \frac{n!}{k!} \binom{n-1}{k-1},
\end{equation*}
and were also defined by means of the Stirling numbers as follows (see \cite{lindsay2011new}):
\begin{equation*} 
L(n,k) = \sum_{j=k}^{n} c(n,j) S(j,k)  
\end{equation*} 
where $c(n,j)$ and $S(j,k)$ are the classical Stirling numbers of the first and second kind, respectively. The recurrence relation of $L(n,k)$ is given by
\begin{equation*}
L(n,k) = L(n-1,k-1) + (n-1+k) L(n-1,k) ~~\textrm{for}~\textrm{all}~~ n,k \in \mathbb{N}.
\end{equation*}
It is well-known from \cite{lindsay2011new} that the exponential generating function of $L(n,k)$ is stated as
 \begin{equation*}
\mathcal{E}(L(n,k))=\sum_{n \geq 0}L(n,k) ~\frac{x^n}{n!} = \frac{1}{k!}( \frac{x}{1-x})^{k}.
 \end{equation*}
Thus, we can conclude that Riordon array $\mathcal{R}(\frac{k!}{n!}L(n,k))=\left(1,\frac{1}{1-x}\right).$
\begin{defn}
The Lah numbers in type $B$ may be defined by 
  \begin{equation*}
      L_B(n,k) = \binom{n}{k}^2 2^{n-k}(n-k)!
  \end{equation*}
and the type $B$ Lah numbers can be expressed as a linear combination of Stirling numbers:
\begin{equation*}
L_B(n,k) = \sum_{j=k}^{n} c_B(n,j) S_B(j,k) ~~\textrm{for}~\textrm{all}~~ n,k \in \mathbb{N}.
\end{equation*}
\end{defn} 
The recurrence relation of $L_B(n,k)$ is defined by 
\begin{equation}\label{recurrence}
L_B(n,k) = L_B(n-1,k-1) + 2(n+k) L_B(n-1,k) ~~\textrm{for}~\textrm{all}~~ n,k \in \mathbb{N},
\end{equation}
with the initial conditions $L_B(n,0) = 2^nn!$, $L_B(n,n) =1$ and $L_B(n,k) = 0$ if $k<0$.

\begin{thm}
The exponential generating function of $L_B(n,k)$ is given by 
\begin{equation} \label{egfL_B}
\mathcal{E}(L_B(n,k))=\sum_{n \geq 0}L_B(n,k) ~\frac{x^n}{n!} = \frac{x^{k}}{(1-2x)^{k+1}k!}.
\end{equation}  
\end{thm}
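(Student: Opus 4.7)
My plan is to exploit the explicit binomial formula $L_B(n,k) = \binom{n}{k}^2 2^{n-k}(n-k)!$ given in the definition. Since $\binom{n}{k}=0$ whenever $n<k$, the summand vanishes for $n<k$, so the exponential generating function reduces to a tail starting at $n=k$:
$$\sum_{n \geq 0}L_B(n,k)\frac{x^n}{n!} \;=\; \sum_{n \geq k}\binom{n}{k}^2 2^{n-k}(n-k)!\,\frac{x^n}{n!}.$$

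Next, I would expand $\binom{n}{k}^2=\frac{(n!)^2}{(k!)^2((n-k)!)^2}$, cancel one factor of $n!$ against the $n!$ in the denominator of $x^n/n!$, and cancel one $(n-k)!$ from numerator and denominator. The summand becomes $\frac{n!}{(k!)^2(n-k)!}\,2^{n-k}\,x^n$. After the change of index $m = n-k$, I would pull the factor $x^k$ outside and rewrite $2^{n-k}x^{n-k}=(2x)^m$, arriving at
$$\frac{x^k}{(k!)^2}\sum_{m\geq 0}\frac{(m+k)!}{m!}(2x)^m \;=\; \frac{x^k}{k!}\sum_{m\geq 0}\binom{m+k}{k}(2x)^m.$$

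The final step invokes the standard binomial generating function $\sum_{m\geq 0}\binom{m+k}{k}y^m=(1-y)^{-(k+1)}$, which, evaluated at $y=2x$, multiplies the prefactor $\frac{x^k}{k!}$ to give exactly $\frac{x^k}{k!(1-2x)^{k+1}}$, as claimed.

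I do not expect any serious obstacle; this is essentially careful factorial bookkeeping followed by a standard identity. The alternative route, should one prefer not to invoke the closed form of $L_B(n,k)$, would be to translate the recurrence (\ref{recurrence}) into the first-order linear ODE
$$(1-2x)f_k'(x) - 2(k+1)f_k(x) = f_{k-1}(x), \qquad f_k(0)=\delta_{k,0},$$
where $f_k(x):=\mathcal{E}(L_B(n,k))$, and to verify by induction on $k$ that the proposed $f_k(x)=\frac{x^k}{k!(1-2x)^{k+1}}$ satisfies it, with the base case $f_0(x)=\sum_n 2^n n!\,\frac{x^n}{n!}=\frac{1}{1-2x}$ following from the initial condition $L_B(n,0)=2^n n!$. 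This alternative route requires differentiation and a short algebraic verification but is otherwise equally routine.
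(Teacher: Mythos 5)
Your proof is correct, but it takes a genuinely different route from the paper. You compute the exponential generating function directly from the closed form $L_B(n,k)=\binom{n}{k}^2 2^{n-k}(n-k)!$: after cancelling factorials and shifting the index $m=n-k$, the series becomes $\frac{x^k}{k!}\sum_{m\geq 0}\binom{m+k}{k}(2x)^m$, and the negative binomial expansion $\sum_{m\geq 0}\binom{m+k}{k}y^m=(1-y)^{-(k+1)}$ finishes it; all the factorial bookkeeping checks out. The paper instead proceeds by induction on $k$: it translates the recurrence $L_B(n,k)=L_B(n-1,k-1)+2(n+k)L_B(n-1,k)$ into the differential equation $(1-2x)f_k'(x)-2(k+1)f_k(x)=f_{k-1}(x)$, multiplies by $(1-2x)^k$ so that the left-hand side becomes the derivative $\bigl((1-2x)^{k+1}f_k(x)\bigr)'$ (the paper's displayed line omits the exponent, a typo), and integrates using the induction hypothesis and $f_k(0)=0$ — which is precisely the alternative you sketch at the end of your proposal. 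The trade-off: your direct summation is shorter, needs no calculus, and uses only the explicit binomial formula, which is exactly what the paper takes as the definition of $L_B(n,k)$; the paper's induction uses only the recurrence and the initial condition $L_B(n,0)=2^n n!$, so it would remain valid if the numbers were instead defined by the recurrence, but since the paper states that recurrence without deriving it from the closed form, your argument is arguably the more self-contained of the two relative to the stated definition.
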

\begin{proof}
We argue by induction on $k$. If $k=0$, then we get  
$$\sum_{n \geq 0}L_B(n,0) ~\frac{x^n}{n!} = \sum_{n \geq 0} 2^n n!~\frac{x^n}{n!} = \frac{1}{1-2x}.$$
Now let $ \mathcal{E}(L_B(n,k)):=f_k(x)$. Thus we can write $f_k(x) = \sum_{n \geq 0}L_B(n,k) ~\frac{x^n}{n!}$. Considering Eq. (\ref{recurrence}), we conclude that
\begin{align*}
    f_k(x) &:= \sum_{n \geq k}L_B(n,k) ~\frac{x^n}{n!}\\
        &=\sum_{n \geq k}L_B(n-1,k-1) ~\frac{x^n}{n!} + \sum_{n \geq k+1}2(n+k)L_B(n-1,k) ~\frac{x^n}{n!}\\
    &= \sum_{n \geq k-1}L_B(n,k-1) ~\frac{x^{n+1}}{(n+1)!} + \sum_{n \geq k}2(n+1+k)L_B(n,k) ~\frac{x^{n+1}}{(n+1)!} \\
    &= \sum_{n \geq k-1}L_B(n,k-1) ~\frac{x^{n+1}}{(n+1)!} +2 \sum_{n \geq k}L_B(n,k) ~\frac{x^{n+1}}{n!} + 2k \sum_{n \geq k}L_B(n,k) ~\frac{x^{n+1}}{(n+1)!} \\
    &=  \sum_{n \geq k-1}L_B(n,k-1) ~\frac{x^{n+1}}{(n+1)!} +2x f_k(x) + 2k  \sum_{n \geq k}L_B(n,k) ~\frac{x^{n+1}}{(n+1)!} 
\end{align*}
hence 
\begin{equation*}
    (1-2x)f_k(x) = \sum_{n \geq k-1}L_B(n,k-1) ~\frac{x^{n+1}}{(n+1)!} + 2k  \sum_{n \geq k}L_B(n,k) ~\frac{x^{n+1}}{(n+1)!}. 
\end{equation*}
If we take the derivative of both sides of the previous equation with respect to $x$, then we get 
\begin{equation*}
    (1-2x)f^{\prime}_k (x) -2f_k(x) = f_{k-1}(x) + 2kf_k(x) 
\end{equation*}
hence
\begin{equation*}
      (1-2x)f^{\prime}_k (x) - 2(k+1)f_k(x) = f_{k-1}(x).
\end{equation*}
Multiplying both sides of the above equality by $(1-2x)^k$, then we obtain by the induction hypothesis for $f_{k-1}(x)$ that
\begin{equation*}
    (1-2x)^{k+1}f^{\prime}_k (x) - 2(k+1)(1-2x)^kf_k(x) = \frac{x^{k-1}}{(k-1)!}.
\end{equation*}
Thus we have
\begin{equation*}
     ((1-2x)f_k(x))^{\prime} = \frac{x^{k-1}}{(k-1)!}.
\end{equation*}
If we solve the initial value problem $((1-2x)f_k(x))^{\prime} = \frac{x^{k-1}}{(k-1)!},~~f_k(0)=0$, we
then obtain
\begin{equation*}
f_k(x) = \frac{x^{k}}{(1-2x)^{k+1}k!},
\end{equation*}
as desired.
\end{proof}

It is straightforward to check that Eq. (\ref{egfL_B}) gives the Riordon array of $\frac{k!}{n!}L_B(n,k)$ as
$$\mathcal{R}(\frac{k!}{n!}L_B(n,k))=\left(\frac{1}{1-2x},\frac{1}{1-2x}\right).$$
Due to \cite{kim2020lah}, the classical Lah-Bell numbers $L(n)$ are defined as
\begin{equation*}
    L(n) = \sum_{k=0}^{n} L(n,k)
\end{equation*}
and the exponential generating function of classical Lah-Bell numbers is given by
\begin{equation*}
    \mathcal{E}(L(n))=\sum_{n=0}^{\infty} L(n) \frac{x^n}{n!} = e^{\frac{x}{1-x}}.
\end{equation*}
Therefore, we can define Lah-Bell numbers in type $B$ as
\begin{equation*}
    LB(n) = \sum_{k=0}^{n} L_B(n,k). 
\end{equation*}
From Eq. (\ref{egfL_B}), we can immediately deduce the exponential generating function of $LB(n)$ in the following form:
\begin{equation*}
    \mathcal{E}(LB(n))=\sum_{n=0}^{\infty} LB(n) \frac{x^n}{n!} = (\frac{1}{1-2x}) e^{\frac{x}{1-2x}}.
\end{equation*}
In Table \ref{tab:table4}, one could see both Lah numbers and Lah-Bell numbers of type $B$ for small values of $n$ and $k$.

		\begin{table}[h!]
  \caption{Lah numbers and Lah-Bell numbers $LB(n)$ of type $B$}\label{tab:table4}

			\centering
   \scalebox{0.8}{
			\begin{tabular}{c|c||cccccccc}
				\hline
				n & $LB(n)$&$L_B(n,0)$ & $L_B(n,1)$ &$L_B(n,2)$&$L_B(n,3)$& $L_B(n,4)$& $L_B(n,5)$&$L_B(n,6)$& $L_B(n,7)$\\
				\hline
				0&1&1\\
				1&3&2&1\\
				2&17&8&8&1\\
				3&139&48&72&18&1\\
				4&1473&384&768&288&32&1\\
				5&19091&3840&9600&4800&800&50&1\\
				6&291793&46080&138240&86400&19200&1800&72&1\\
				7&5129307&645120&2257920&1693440&470400&58800&3528&98&1\\
				\hline
			\end{tabular}}

		\end{table}
\newpage
We observe that Lah numbers of type $B$ can be expressed by the falling factorial as follows:
\begin{align}\label{imp}
L_B(n,k) = \binom{n}{k} (2n+1)_{n-k}^B
\end{align}
where $(2n+1)_{n-k}^B$ is falling factorial of type $B$. As a consequence of Eq. (\ref{imp}), we can give the next result.

\begin{lem}\label{tog}
Let $n \geq 0$. Thus we have
\begin{itemize}
    \item [1.] $[x]^B_n =(x+2n)_n^B=\sum_{k=0}^{n} \binom{n}{k} (2n+1)_{n-k}^B(x)_{k}^B=\sum_{k=0}^{n} L_B(n,k) (x)_{k}^B$, 
    \item [2.] $(x)^B_n =\sum_{k=0}^{n} (-1)^{n-k} L_B(n,k) [x]_{k}^B$. 
\end{itemize}
\end{lem}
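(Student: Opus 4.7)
My plan is to reduce the claim to a single induction on $n$. First, I would observe that the first equality $[x]_n^B = (x+2n)_n^B$ is immediate from the definition of the type-$B$ falling factorial: via the reindexing $j=n-i$,
\[
(x+2n)_n^B = \prod_{i=0}^{n-1}(x+2n-1-2i) = \prod_{j=1}^{n}(x+2j-1) = [x]_n^B.
\]
The identification of the last two expressions in part~(1) is exactly Eq.~(\ref{imp}), so the substantive content of part~(1) is the middle equality
\[
[x]_n^B = \sum_{k=0}^n L_B(n,k)\,(x)_k^B.
\]

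To prove this by induction on $n$, the base $n=0$ is trivial. For the step, I would start from $[x]_{n+1}^B = (x+2n+1)[x]_n^B$ and apply the induction hypothesis. The key manipulation is to split $x+2n+1 = (x-2k-1)+2(n+k+1)$; together with $(x-2k-1)(x)_k^B = (x)_{k+1}^B$, this yields
\[
(x+2n+1)(x)_k^B = (x)_{k+1}^B + 2(n+k+1)(x)_k^B.
\]
After substituting this into the sum and shifting the index in the first piece, the coefficient of $(x)_j^B$ in $[x]_{n+1}^B$ becomes $L_B(n,j-1)+2(n+j+1)L_B(n,j)$, which is exactly $L_B(n+1,j)$ by the recurrence~(\ref{recurrence}). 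This closes the induction.

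For part~(2), I would deduce it from part~(1) via the substitution $x \mapsto -x$. A direct inspection of the defining products shows
\[
[-x]_n^B = (-1)^n (x)_n^B, \qquad (-x)_k^B = (-1)^k [x]_k^B.
\]
Plugging $-x$ into the identity from part~(1) and pulling out an overall factor of $(-1)^n$ immediately yields $(x)_n^B = \sum_{k=0}^n (-1)^{n-k} L_B(n,k)[x]_k^B$, as required.

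The main obstacle is really just bookkeeping: making sure the coefficient in the splitting step is $2(n+k+1)$ rather than $2(n+k)$, so that after reindexing it lines up correctly with $L_B(n+1,j) = L_B(n,j-1) + 2((n+1)+j)L_B(n,j)$. No deeper combinatorial identity is required; the result is driven entirely by the one-step multiplication rule for the type-$B$ falling factorial and the recurrence for $L_B(n,k)$.
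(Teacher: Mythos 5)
Your proof is correct, but it takes a genuinely different route from the paper. For the middle identity $[x]_n^B=\sum_{k}L_B(n,k)(x)_k^B$, the paper simply composes two known change-of-basis expansions: it writes $[x]_n^B=\sum_j c_B(n,j)x^j$ (Eq.~(\ref{50})), expands each $x^j=\sum_k S_B(j,k)(x)_k^B$ (Theorem~\ref{m}), and identifies the resulting coefficient $\sum_{j\ge k}c_B(n,j)S_B(j,k)$ with $L_B(n,k)$ by the Stirling-sum definition; part~(2) is done the same way from Eqs.~(\ref{9}) and~(\ref{7}). You instead bypass the Stirling numbers entirely: an induction on $n$ driven by the one-step rule $(x+2n+1)(x)_k^B=(x)_{k+1}^B+2(n+k+1)(x)_k^B$ and the recurrence~(\ref{recurrence}), followed by the substitution $x\mapsto -x$ (using $[-x]_n^B=(-1)^n(x)_n^B$ and $(-x)_k^B=(-1)^k[x]_k^B$) to get part~(2) from part~(1); your bookkeeping of the shifted coefficient $L_B(n,j-1)+2(n+1+j)L_B(n,j)=L_B(n+1,j)$ and of the boundary terms $k=0$, $k=n+1$ is right. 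The paper's argument is shorter and makes transparent why $L_B(n,k)=\sum_j c_B(n,j)S_B(j,k)$ are exactly the connection coefficients between the two factorial bases; yours is more self-contained (no appeal to Theorem~\ref{m} or Eqs.~(\ref{50}),~(\ref{9}),~(\ref{7})) and the negation trick for part~(2) is slicker than re-running the computation. The only dependency to flag is that you take the recurrence~(\ref{recurrence}) as given; the paper states it without proof (its own proof of the Lemma uses the Stirling-sum definition instead), so if one wanted a fully closed argument along your lines, the recurrence would need its own verification, e.g.\ directly from $L_B(n,k)=\binom{n}{k}^2 2^{n-k}(n-k)!$.
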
	

\begin{proof}
It is clear from the definitions of the rising and the falling factorials of type $B$ that $[x]^B_n =(x+2n)_n^B$. Eq. (\ref{imp}) yields to the equality 
$$\sum_{k=0}^{n} L_B(n,k) (x)_{k}^B=\sum_{k=0}^{n} \binom{n}{k} (2n+1)_{n-k}^B(x)_{k}^B.$$
The signless Stirling numbers of the first kind in type $B$ actually appear as coefficients in the expression of the rising factorial of type $B$ in terms of ordinary powers $x^n$. Therefore, based on Eq. (\ref{50}) and Theorem \ref{m}, we conclude the following result: 
\begin{align*}
[x]^B_n &= \sum_{j=0}^{n} c_B(n,j) x^j = \sum_{j=0}^{n} c_B(n,j) \sum_{k=0}^{j} S_B(j,k)(x)_k^B \\
&= \sum_{k=0}^{n} \left(\sum_{j=k}^{n} c_B(n,j)S_B(j,k) \right)(x)_k^B
= \sum_{k=0}^{n} L_B(n,k) (x)_k^B.
\end{align*}
Considering the above facts together, we obtain the first part of the lemma. Similarly, the second part of the lemma can be easily proven by applying Eq. (\ref{9}) and Eq. (\ref{7}).
\end{proof}

\begin{rem}
Lemma \ref{tog} has the following additional interpretation. The set $V=K[x]$ be a vector space of all polynomials in the indeterminate $x$ with coefficients in the field $K$. The sets $B_1=\{1, [x]_1^B, [x]_2^B, \cdots\}$ and $B_2=\{1, (x)_1^B, (x)_2^B, \cdots\}$ are both bases of $V$. Then the first part of Lemma \ref{tog} asserts that the infinite matrix $\textbf{L}=[L_B(n,k)]_{k,n \in \mathbb{N}}$ is the transition matrix between the basis $B_2$ and the basis $B_1$. Conversely,  the second part of Lemma \ref{tog} asserts that the infinite matrix $\textbf{M}=[(-1)^{n-k}L_B(n,k)]_{k,n \in \mathbb{N}}$ is the transition matrix between the basis $B_1$ and the basis $B_2$. Therefore, the matrix $\textbf{M}$ is the inverse to the matrix $\textbf{L}$.
\end{rem}

The following theorem provides an important relationship between Cauchy numbers of both kinds in type $B$ by using Lah numbers $L_B(n,k)$. 
\begin{thm}\label{lahi}
Let $n \geq 0$, then 
\begin{align*}
c_n^B = \sum_{k=0}^{n} L_B(n,k) C_k^B \quad and \quad C_n^B = \sum_{k=0}^{n} (-1)^{n-k} L_B(n,k) c_k^B
\end{align*}
where $ L_B(n,k)$ is of type $B$ Lah numbers.
\end{thm}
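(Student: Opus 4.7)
The plan is to derive both identities directly from Lemma \ref{tog} by integrating the two polynomial identities over $[0,1]$, exchanging the finite sum and the integral, and recognizing the resulting integrals as Cauchy numbers of the appropriate kind in type $B$.

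More concretely, for the first identity I would start from part (1) of Lemma \ref{tog}, namely the expansion
\[
[x]^B_n = \sum_{k=0}^{n} L_B(n,k)\,(x)^B_k,
\]
and integrate both sides from $0$ to $1$. The left-hand side gives $\int_0^1 [x]_n^B\,dx = c_n^B$ by the definition of the second-kind Cauchy numbers in type $B$, while the right-hand side becomes $\sum_{k=0}^{n} L_B(n,k)\int_0^1 (x)_k^B\,dx = \sum_{k=0}^{n} L_B(n,k)\,C_k^B$ by the definition of the first-kind Cauchy numbers. Since $L_B(n,k)$ does not depend on $x$, the interchange of summation and integration is trivial.

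The second identity is entirely analogous. Starting from part (2) of Lemma \ref{tog},
\[
(x)^B_n = \sum_{k=0}^{n}(-1)^{n-k} L_B(n,k)\,[x]^B_k,
\]
and integrating from $0$ to $1$, the left-hand side produces $C_n^B$ and the right-hand side produces $\sum_{k=0}^{n}(-1)^{n-k}L_B(n,k)\,c_k^B$.

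There is essentially no obstacle here: all of the combinatorial content is already packaged in Lemma \ref{tog}, which identifies the matrices $[L_B(n,k)]$ and $[(-1)^{n-k}L_B(n,k)]$ as mutually inverse transition matrices between the bases $\{(x)^B_k\}$ and $\{[x]^B_k\}$. The theorem is then the statement that, when one applies the linear functional $p(x)\mapsto \int_0^1 p(x)\,dx$ to these basis changes, one recovers exactly the sequences $\{C_n^B\}$ and $\{c_n^B\}$. Thus the only thing to verify is that one correctly identifies the two integrals and writes the sums in the order the theorem states; no further calculation is required.
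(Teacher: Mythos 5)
Your proposal is correct and matches the paper's own argument: the paper likewise proves the theorem by integrating the two identities of Lemma \ref{tog} over $[0,1]$ and recognizing the resulting integrals as $c_n^B$, $C_k^B$ and $C_n^B$, $c_k^B$ respectively. No differences worth noting.
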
		
\begin{proof}
Consider the first part of Lemma \ref{tog}. We obtain by integrating of $[x]^B_n =\sum_{k=0}^{n} L_B(n,k) (x)_{k}^B$ from $0$ to $1$ on both sides that $ c_n^B = \sum_{k=0}^{n} L_B(n,k) C_k^B$.
Using the second part of Lemma \ref{tog}, it can be easily proved that $C_n^B = \sum_{k=0}^{n} (-1)^{n-k} L_B(n,k) c_k^B$ in a similar way.
\end{proof}

\begin{rem}
Lemma \ref{tog} and Theorem \ref{lahi} can both be thought of as a Lah inversion in type $B$ in the sense of 3.38 Corollary (iii) on page 96 of \cite{maigner1979}.   
\end{rem}

\section{Future Directions}
Two kinds of incomplete Cauchy numbers of type $B$, which are generalizations of the Cauchy numbers of type $B$, may be investigated in future studies by introducing the restricted Stirling numbers of type $B$. In addition, the identities associated with Cauchy numbers of both kinds in type $B$ can be generalized into colored type $G_{m,n}$ as a future work. Furthermore, new Cauchy numbers may be studied by considering the Stirling numbers of the second kind in type $D$.

\bibliographystyle{plain}

	\end{document}